\newtheorem{lemma}{Lemma}
\newtheorem{observation}[lemma]{Observation}
\def\qed{$\Box$}
\newtheorem{result}{Result}
\date{}
\begin{document}

\title{On the construction of non-$2$-colorable uniform hypergraphs}
\author{Jithin Mathews 
\footnote{IIT Guwahati, Assam-781039,
India. Email: m.jithin@iitg.ernet.in} 
\and Manas Kumar Panda \footnote{IIT Guwahati, Assam-781039, India. Email: manas.panda@iitg.ernet.in}
\and Saswata Shannigrahi \footnote{IIT Guwahati, Assam-781039, India. Email: saswata.sh@iitg.ernet.in}
\footnote{Corresponding author}
}
\maketitle


\begin{abstract}
The problem of $2$-coloring uniform hypergraphs has been extensively studied over the last few decades. An $n$-uniform hypergraph is not 
$2$-colorable if its vertices cannot be colored with two colors, Red and Blue, such that every hyperedge contains Red as well as Blue vertices. The least possible number of hyperedges in an $n$-uniform hypergraph which is not $2$-colorable is denoted by $m(n)$. In this paper, we consider the problem of finding an upper bound on $m(n)$ for small values of $n$. We provide constructions which improve the existing results for some such values of $n$. We obtain the first improvement in the case of $n=8$.
\vspace{1mm}

\noindent \textbf{Keywords:} Uniform Hypergraph; Property B; Hypergraph Coloring
\end{abstract}

\section{Introduction}
\label{introduction}

A hypergraph is said to have property B if there exists a proper subset $\textit{S}$ of its vertices such that every hyperedge of the hypergraph contains vertices from both $\textit{S}$ and $\textit{$\overline{S}$}$, the complement of $S$. In other words, if the vertices of a hypergraph can be colored using Red and Blue colors such that every hyperedge of the hypergraph contains Red as well as Blue vertices, then that hypergraph is said to have {\it Property B}. It is known that there exist $2$-colorable hypergraphs with arbitrarily high number of hyperedges. For example, Lov\'{a}sz mentions the following in Problem $13.33$ of \cite{Lovasz}: if there are no two hyperedges with exactly one common vertex in a hypergraph, then the hypergraph is $2$-colorable. 

The least possible number of hyperedges in an $n$-uniform hypergraph which does not have Property B is denoted by $m(n)$. One motivation to study such hypergraphs is the following relationship between non-$2$-colorable hypergraphs and unsatisfiable CNF formulas. In fact, constructing a non-$2$-colorable $n$-uniform hypergraph $H$ with $x$ hyperedges is equivalent to constructing an unsatisfiable monotone $n$-CNF with $2x$ clauses. For a given $n$-uniform hypergraph $H$, let $H'$ denote the $n$-CNF obtained by adding clauses $C_{e}:= (x_{1} \vee x_{2} \dots \vee x_{n})$ and $\bar C_{e}:= (\bar x_{1} \vee \bar x_{2} \dots \vee \bar x_{n})$ for every hyperedge $e$ = $\{x_{1},x_{2}, \dots,x_{n}\} \in H$. Note that $H'$ is monotone, i.e., every clause of $H'$ either contains only non-negated literals or only negated literals. It can be easily seen that every $2$-coloring $\chi$ of $H$ yields a satisfying assignment $\alpha$ of $H'$ and vice versa.

A lot of work has been done to find a lower bound on $m(n)$. Erd\H{o}s \cite{erdos1} showed that $m(n) = \Omega({2^n})$, which was improved to $\Omega({n^{1/3 - o(1)} 2^n})$ by Beck \cite{beck}. It was further improved by  Radhakrishnan and Srinivasan \cite{radhakrishnan} to the currently best known lower bound $m(n)=\Omega({\sqrt{\frac{n}{\ln n}} 2^n})$. In the other direction, Erd\H{o}s \cite{erdos} used the probabilistic method to show that $m(n) = O(n^2 2^n)$. However, as it is typical in such constructions, this method did not provide any explicit construction that matches this bound. For a general $n$, only a few explicit constructions of non-2-colorable uniform hypergraphs are known. Recently, Gebauer \cite{geba} provided a construction which produces a non-$2$-colorable $n$-uniform hypergraph containing $2^n \cdot 2^{O(n^{\frac{2}{3}})}$ hyperedges for a sufficiently large $n$. This is the asymptotically best-known  constructive upper bound on $m(n)$ till now. Some constructions for small values of $n$ are mentioned in \cite{toft}. However, all the known constructions give an upper bound on $m(n)$ which is asymptotically far from Erd\H{o}s's non-constructive upper bound.

Finding $m(n)$ for smaller values of $n$ is also an extensively studied topic. It is straightforward to see that $m(2) = 3$ (triangle graph) and $m(3) = 7$ (Fano plane \cite{klein}). But, finding $m(n)$ for $n\geq 4$ remained an open problem for a long time. The construction of Abbott and Hanson \cite{Abbott} proved that $m(4) \leq  24$. Seymour \cite{seymour} improved upon this construction to show that $m(4) \leq  23$. In the other direction, Manning \cite{manning} provided a proof that $m(4) \geq  21$. Recently, \"{O}sterg\r{a}rd \cite{patric} has proved that $m(4) \geq 23$. 
This shows that $m(4) = 23$, and settles this problem for $n = 4$. The problem of finding the exact value of $m(n)$ for $n\geq 5$ still remains an open problem.

The best known upper bound on $m(n)$ for small values of $n$ are obtained from three constructions where each construction provides a recurrence relation. We describe these constructions below.

The first construction is due to Abbott and Moser \cite{abb&mos} for all composite values of $n$. They proved that if $n$ = $a \cdot b$ such that $a$ and $b$ are integer factors of $n$, then $m(n) \leq m(a) \cdot (m(b))^a$. Let us discuss their construction in brief. Take an $a$-uniform hypergraph giving the best known value for $m(a)$ and replace the vertices of this hypergraph with different $b$-uniform hypergraphs that gives the best known value for $m(b)$. Note that the vertex set of each of these $b$-uniform hypergraphs must be disjoint from each other. In order to form an $n$-uniform hyperedge, the hyperedges of these $a$ different $b$-uniform hypergraphs are combined with each other in every possible way. Therefore, the $n$-uniform hypergraph has a total of $m(a) \cdot (m(b))^a$ hyperedges. A more detailed description of this construction and the proof for the claim that the hypergraph formed by this construction is not $2$-colorable can be found in \cite{abb&mos}.

The second construction is by Abbott and Hanson \cite{Abbott} which gives the recurrence $m(n) \leq 2^{n-1} + n \cdot m(n-2)$ when $n$ is odd. The third construction is by Toft \cite{toft2}, who obtained the following bound when $n$ is even: $m(n) \leq 2^{n-1} + {n \choose {n / 2}} / {2} + n \cdot m(n-2)$. In Section $2$, we discuss these two constructions in some details. Exoo \cite{exoo} tried to improve the upper bounds on $m(n)$ for small values of $n$ using computer programs. However, his constructions did not improve any of the known upper bounds obtained from the recurrences mentioned above \cite{toft}. 

\subsection{Our contribution} In Section $3$, we provide a construction that improves the currently best known upper bounds on $m(n)$ for some small values of $n$, starting from $n = 13$. 

\begin{result}
We provide a construction which shows that $m(n) \leq (n+1) \cdot 2^{n-2} + \linebreak (n-1) \cdot m(n-2)$ when $n$ is odd, and $m(n) \leq (n+1) \cdot 2^{n-2} + {n \choose {n / 2}}/{2} + (n-1) \cdot \left( m(n-2) + {n-2 \choose {(n-2)}/{2}} \right)$ when $n$ is even.
\end{result}

\noindent However, in Section $4$, we provide another construction that improves the construction given in Section $3$.

\begin{result}
We provide a construction which shows that $m(n) \leq (n+4) \cdot 2^{n-3} + \linebreak (n-2) \cdot m(n-2)$ when $n$ is odd, and $m(n) \leq (n+4) \cdot 2^{n-3} +
n \cdot {n-2 \choose {(n-2)/2}}/{2} + {n \choose {n / 2}}/{2} + (n-2) \cdot m(n-2)$ when $n$ is even.
\end{result}

\noindent Using this recurrence relation, the first improvement is obtained when $n=11$. We show that $m(11) \leq 25449$, which improves the currently best known bound $m(11) \leq 27435$. \\\\
In Section $5$, we provide a construction for $n=8$ that improves the currently best known bound $m(8) \leq 1339$.
\begin{result}
$m(8) \leq 1269.$
\end{result}

\section{The construction of Abbott and Hanson \cite{Abbott}, and Toft \cite{toft2}}

In this section, we discuss the construction made by Abbott and Hanson \cite{Abbott}, which was further improved upon by Toft \cite{toft2}. In fact, Abbott and Hanson's \cite{Abbott} construction is good for odd values of $n$, while Toft \cite{toft2} improved their construction for even values of $n$. Now let us discuss their construction, which we call as the {\it AHT construction}, in detail.\footnote{Note that the variables used in a section is valid only inside that section.}

For a given $n \geq 3$, let us consider an $(n-2)$-uniform hypergraph producing the best known upper bound for $m(n-2)$. We denote this hypergraph as a {\it core hypergraph}. Let $m_{n-2}$ be the number of hyperedges present in this core hypergraph $C =(X, Y)$. The set of hyperedges is denoted by 
$Y = \{e_1, e_2, e_3, \ldots, e_{m_{n-2}}\}$, where $e_i$ is the $i^{th}$ hyperedge in the core hypergraph $C$. 
Let $U = \{u_1, u_2, u_3, \ldots, u_n\}$ and $V = \{v_1, v_2, v_3, \ldots, v_n\}$ denote two disjoint set of vertices, each of them disjoint from $X$, the set of vertices in the core hypergraph $C$.
For each $1 \leq i \leq n$, we call $u_i$ and $v_i$ to be a pair of {\it matching vertices}. For any $K =  \{a_1, a_2, \ldots a_k\}$ which is a proper subset of $\{1, 2, \ldots, n\}$ such that $1 \leq a_1 < a_2 \ldots < a_k \leq n$, we denote $U_K = \{u_{a_1}, u_{a_2}, \ldots, u_{a_k}\}$ and $V_K = \{v_{a_1}, v_{a_2}, \ldots, v_{a_k}\}$. We also define 
${\bar{U}}_K = U \setminus U_K$ and ${\bar{V}}_K = V \setminus V_K$.

Let us now define the construction of the non-$2$-colorable $n$-uniform hypergraph $H$. The vertex set of this hypergraph is $X \cup U \cup V$, and the following hyperedges are present in $H$:\\

$(i)$ $\{u_i\} \cup e_j \cup \{v_i\}$ for every pair of $i, j$ satisfying $1 \leq i \leq n$ 
and $1 \leq j \leq m_{n-2}$. 

$(ii)$ $U$

$(iii)$ $U_K \cup {\bar{V}}_K$ for every $K$ such that $|K|$ is odd, and $1 \leq |K| \leq \lfloor n/2 \rfloor$.

$(iv)$ $V_K \cup {\bar{U}}_K$ for every $K$ such that $|K|$ is even, and $2 \leq |K| \leq \lfloor n/2 \rfloor$. \\

\noindent Note that the total number of hyperedges in this hypergraph is \\

$1 + \binom{n}{1} + \binom{n}{2} + \ldots + \binom{n}{\lfloor n/2 \rfloor} + n \cdot m_{n-2}$.
\\

\noindent When $n$ is odd, this shows that $m(n) \leq 2^{n-1} + n \cdot m(n-2)$.

\noindent When $n$ is even, this shows that $m(n) \leq 2^{n-1} + {n \choose {n / 2}}/{2} + n \cdot m(n-2)$.
\begin{table}
\caption{The currently best known upper bound on $m(n)$ for small values of $n$}
\begin{center}
\begin{tabular}{|c|c|c|}
\hline
$n$ & $m(n)$ & Obtained by the recurrence relation\\
\hline
1& $m(1) = 1$&Single vertex\\
2&$m(2) = 3$&Triangle graph\\
3&$m(3) = 7$&Fano plane \cite{klein}\\
4&$m(4) = 23$&$m(4) \leq 2^3 + {4 \choose 2} / {2} + 4 \cdot m(2)$ and $\cite{patric}$\\
5&$m(5) \leq 51$&$m(5) \leq 2^4 + 5 \cdot m(3)$\\
6&$m(6) \leq 147$&$m(6) \leq m(2) \cdot m(3)^2$\\
7&$m(7) \leq 421$&$m(7) \leq 2^6 + 7 \cdot m(5)$\\
8&$m(8) \leq 1339$&$m(8) \leq 2^7 + {8 \choose 4} / {2} + 8 \cdot m(6)$\\
9&$m(9) \leq 2401$&$m(9) \leq m(3) \cdot m(3)^3$\\
10&$m(10) \leq 7803$&$m(10) \leq m(2) \cdot m(5)^2$\\
11&$m(11) \leq 27435$&$m(11) \leq 2^{10} + 11 \cdot m(9)$\\
12&$m(12) \leq 64827$&$m(12) \leq m(2) \cdot m(6)^2$\\
13&$m(13) \leq 360751$&$m(13) \leq 2^{12} + 13 \cdot m(11)$\\
14&$m(14) \leq 531723$&$m(14) \leq m(2) \cdot m(7)^2$\\
15&$m(15) \leq 857157$&$m(15) \leq m(5) \cdot m(3)^5$\\
16&$m(16) \leq 5378763$&$m(16) \leq m(2) \cdot m(8)^2$\\
17&$m(17) \leq 14637205$&$m(17) \leq 2^{16} + 17 \cdot m(15)$\\

\hline
\end{tabular}
\end{center}
\end{table}

In Table $1$, we show the upper bound on $m(n)$ for some small values of $n$ using the currently best known constructions for small $n$. In the following section, we provide a better construction which improves the upper bound of $m(n)$ for small values of $n$.

\section{The first improvement over the AHT construction}

For a given $n \geq 3$, let us consider an $(n-2)$-uniform hypergraph producing the best known upper bound for $m(n-2)$. Note that this hypergraph is the {\it core hypergraph} as described in the previous section. Let $m_{n-2}$ be the number of hyperedges present in this \textit{core hypergraph} $C =(X, Y)$. The set of hyperedges is denoted by 
$Y = \{e_1, e_2, e_3, \ldots, e_{m_{n-2}}\}$, where $e_i$ is the $i^{th}$ hyperedge in the core hypergraph $C$. 
Let $U = \{u_1, u_2, u_3, \ldots, u_n\}$ and $V = \{v_1, v_2, v_3, \ldots, v_n\}$ denote two disjoint set of vertices, each of them disjoint from $X$, the set of vertices in the core hypergraph $C$. Let $V^1 = (v_1, v_2, v_3, \ldots, v_n)$ denote the ordered set where $v_1 \prec v_2 \ldots \prec v_n$. For any $1 \leq p \leq n$, let $v_1^p=v_p$, $v_p^1 = v_1$ and $v_j^j = v_j$ for all $j \neq 1,p$. Let $V^p$ be the ordered set formed after swapping $v_1$ and $v_p$ of $V^1$. Then, $V^p = (v_1^p, v_2,\ldots, v_{p-1},v_p^1,v_{p+1} \ldots v_n)$ where $v_1^p \prec v_2^p \ldots \prec v_n^p$.

Consider one particular ordered set $V^p$. For each $1 \leq i \leq n$, we call $u_i \in U$ and $v_i^p \in V^p$ to be a {\it matching pair} of vertices in the set-ordered set pair ($U,V^p$). For any $K =  \{a_1, a_2, \ldots a_k\}$ which is a proper subset of $\{1, 2, \ldots, n\}$ such that $1 \leq a_1 < a_2 \ldots < a_k \leq n$, we denote $U_K = \{u_{a_1}, u_{a_2}, \ldots, u_{a_k}\}$ and $V_K^p = \{v_{a_1}^p, v_{a_2}^p, \ldots, v_{a_k}^p\}$ where $v_{a_1}^p$ is the matching vertex of $u_{a_1}$. We also define ${\bar{U}}_K = U \setminus U_K$ and ${\bar{V}^p_K} = V \setminus V_K^p$.

Now we define the construction of the non-$2$-colorable $n$-uniform hypergraph $H$. The vertex set of this hypergraph is $X \cup U \cup V$, and the set of hyperedges is the following:\\

$(i)$ $\{u_i\} \cup e_j \cup \{v_i\}$ for every pair of $i, j$ satisfying $2 \leq i \leq n$ and $1 \leq j \leq m_{n-2}$.

$(ii)$ $U$

$(iii)$ $U_K \cup {\bar{V}^p_K}$ for every $1 \leq p \leq n$ and every $K$ such that $|K|$ is odd, and $1 \leq |K| \leq \lfloor n/2 \rfloor$.

$(iv)$ $V_K^p \cup {\bar{U}}_K$ for every $1 \leq p \leq n$ and every $K$ such that $|K|$ is even, and $2 \leq |K| \leq \lfloor n/2 \rfloor$. \\

Note that the number of hyperedges formed in step $(i)$ is $(n-1) \cdot m_{n-2}$.
Recall that the step $(i)$ of the construction discussed in Section $2$ produces $n \cdot m_{n-2}$ hyperedges and we decrease this number to $(n-1) \cdot m_{n-2}$.

The step $(ii)$ of this construction produces a single hyperedge, while steps $(iii)$ and $(iv)$ produce a total of $\binom{n}{1} + \binom{n}{2} + \ldots + \binom{n}{\lfloor{n/2}\rfloor}$ hyperedges for every $p$ satisfying $1 \leq p \leq n$.
Therefore, we obtain:\\\\
$m(n) \leq n \cdot 2^{n-1} - (n-1) + (n-1) \cdot m(n-2)$, when $n$ is odd and,\\
$m(n) \leq n \cdot (2^{n-1} + {n \choose {n / 2}}/{2}) - (n-1) + (n-1) \cdot m(n-2)$, when $n$ is even.

\paragraph*{}

The two recurrence relations mentioned above can further be optimized because many of the hyperedges are repeated in the construction above. We observe below that we can improve these recurrence relations through a more careful analysis. 

\begin{observation}
Consider the hyperedges formed in the steps $(iii)$ and $(iv)$ of the construction. For any given $p$ satisfying $2 \leq p \leq n$, all the hyperedges which contain both the vertices $u_1$ and $u_p$, or both the vertices $v_1$ and $v_p$ are already formed in the steps $(iii)$ and $(iv)$ for $p = 1$.
\end{observation}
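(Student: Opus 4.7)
The plan is a direct case analysis exploiting the fact that the ordered set $V^p$ differs from $V^1$ only by swapping the vertices occupying positions $1$ and $p$. The driving observation is this: for any $K \subseteq \{1,\dots,n\}$, if either $\{1,p\} \subseteq K$ or $\{1,p\} \cap K = \emptyset$, then $V_K^p$ and $V_K^1$ coincide as \emph{sets of vertices}, because the two swapped vertices $v_1$ and $v_p$ are either both in $V_K^p$ or both absent from it. Consequently $\bar V_K^p = \bar V_K^1$, and any hyperedge of the form $U_K \cup \bar V_K^p$ or $V_K^p \cup \bar U_K$ equals its counterpart at $p=1$ with the same index set $K$ and the same parity of $|K|$.

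First I would translate the two containment hypotheses into conditions on $K$ in each of the step types. For a step $(iii)$ hyperedge $U_K \cup \bar V_K^p$: containing both $u_1$ and $u_p$ forces $\{1,p\} \subseteq K$ (since $u_i \in U_K$ iff $i \in K$); containing both $v_1$ and $v_p$ forces $\{1,p\} \cap K = \emptyset$, because under the swap one has $v_1 = v_p^p$ and $v_p = v_1^p$, so $v_1 \in V_K^p$ iff $p \in K$ and $v_p \in V_K^p$ iff $1 \in K$, and we need both to lie in $\bar V_K^p$. For a step $(iv)$ hyperedge $V_K^p \cup \bar U_K$ the roles of $U$ and $V$ are reversed, so containing $\{u_1,u_p\}$ forces $\{1,p\} \cap K = \emptyset$ while containing $\{v_1,v_p\}$ forces $\{1,p\} \subseteq K$.

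In every one of the four resulting subcases, $K$ satisfies the hypothesis of the driving observation, so the given hyperedge equals the one generated at $p=1$ for exactly the same $K$; and that hyperedge is indeed present at $p=1$ because the parity and cardinality constraints on $|K|$ do not depend on $p$. I do not anticipate any conceptual obstacle here; the only care needed is to keep the notation $v_i^p$ (the vertex sitting in position $i$ of the ordered set $V^p$) cleanly separated from the labeled vertex $v_i$, since this is where a routine slip is most likely to occur.
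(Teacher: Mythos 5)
Your proof is correct. The paper states this Observation without any proof and proceeds directly to the counting, and your case analysis --- reducing everything to the fact that $V_K^p=V_K^1$ as vertex sets whenever $\{1,p\}\subseteq K$ or $\{1,p\}\cap K=\emptyset$, and checking that each of the four containment hypotheses forces one of these two conditions --- is exactly the verification the authors leave implicit.
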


The number of hyperedges formed due to the set-ordered set pair ($U,V^1$) is $1 + \binom{n}{1} + \binom{n}{2} + \ldots + \binom{n}{\lfloor n/2 \rfloor} + (n-1) \cdot m_{n-2}$. The number of new hyperedges formed due to set-ordered set pair ($U,V^p$) for each $2 \leq p \leq n$, is at most $2\cdot\left(1 + \binom{n-2}{1} + \binom{n-2}{2} + \ldots + \binom{n-2}{\lfloor (n-2)/2 \rfloor} \right)$. This is due to the fact that the number of new hyperedges that contains one of the vertices $u_1$ and $u_p$ and are formed in the step $(iii)$ of the construction is $2\cdot\left(1 + \binom{n-2}{2} + \binom{n-2}{4} + \ldots + \binom{n-2}{\lfloor (n-2)/2 \rfloor} \right)$, similarly the number of new hyperedges that contains one of the vertices $v_1$ and $v_p$ and are formed in the step $(iv)$ of the construction is $2\cdot\left(\binom{n-2}{1} + \binom{n-2}{3} + \ldots + \binom{n-2}{\lfloor (n-2)/2 \rfloor} \right)$. Therefore, we obtain:\\\\
$m(n) \leq (n+1) \cdot 2^{n-2} + (n-1) \cdot m(n-2)$, when $n$ is odd and\\
$m(n) \leq (n+1) \cdot 2^{n-2} + {n \choose {n / 2}}/{2} + (n-1) \cdot
\left( m(n-2) + {n-2 \choose {(n-2)/2}} \right)$, when $n$ is even.\\

\noindent Let us now prove a Lemma which will be useful to prove that $H$ is not 2-colorable.

\begin{lemma}
\label{firstlemma}
Consider a coloring $\chi$ of the hypergraph $H$ having the property that there does not exist two different \textit{matching pairs} of vertices ($u_i,v_i^p$) and ($u_j,v_j^p$) in any of the set-ordered set pair ($U,V^p$) such that $u_i$ and $v_i^p$ are colored with Red, and, $u_j$ and $v_j^p$ are colored with Blue in $\chi$. Then, $\chi$ is not a proper $2$-coloring of $H$.
\end{lemma}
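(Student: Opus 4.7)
Suppose for contradiction that $\chi$ is a proper $2$-coloring of $H$ satisfying the hypothesis. Since the hyperedge $U$ (step (ii)) must be bichromatic, the index sets $R_U = \{i : u_i \text{ is Red}\}$ and $B_U = \{i : u_i \text{ is Blue}\}$ are both nonempty. Applied to each $p$, the hypothesis requires that at least one of the following holds: $(A_p)$ every $v_i^p$ with $i \in R_U$ is Blue, or $(B_p)$ every $v_j^p$ with $j \in B_U$ is Red. Since the construction of $H$ is symmetric under swapping the two colors, we may assume $(A_1)$ holds, so that $v_i$ is Blue for every $i \in R_U$.

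Put $B_R = \{j \in B_U : v_j \text{ is Red}\}$ and argue by cases on $|B_R|$. If $|B_R| \leq 1$, pick $k \in B_U$ to be the unique Red-$v$ index in $B_U$ (or any index in $B_U$ if $B_R = \emptyset$); the step (iii) hyperedge $\{u_k\} \cup (V \setminus \{v_k\})$ taken at $p = 1$ with $K = \{k\}$ is then entirely Blue, a contradiction. So assume $|B_R| \geq 2$, and split further on whether some $(B_{p_0})$ holds.

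If some $(B_{p_0})$ holds, one verifies by tracing what $(B_{p_0})$ says about each $v_j^{p_0}$ in the two sub-cases $1 \in R_U$ and $1 \in B_U$ (using $(A_1)$ to rule out a contradiction at $j = 1$ or $j = p_0$) that $v_j$ is Red for every $j \in B_U$, so $V$ is colored oppositely to $U$ on matching indices. Then $\{u_i : i \in R_U\} \cup \{v_j : j \in B_U\}$ is monochromatic Red and $\{u_j : j \in B_U\} \cup \{v_i : i \in R_U\}$ is monochromatic Blue; either hyperedge is realized by step (iii) with $K = R_U$ or $K = B_U$ (odd cardinality, $\leq \lfloor n/2 \rfloor$) or by step (iv) with the same $K$ (even cardinality, $2 \leq |K| \leq \lfloor n/2 \rfloor$), all at $p = 1$. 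Since $|R_U| + |B_U| = n$, at least one of $|R_U|, |B_U|$ is $\leq \lfloor n/2 \rfloor$, and a brief parity check on that smaller cardinality shows one of the four realizations lies in $H$.

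If on the other hand no $(B_{p_0})$ holds, then $(A_p)$ holds for every $p$. Chaining these constraints --- applying $(A_p)$ with $i = 1$ when $1 \in R_U$ to force $v_p$ Blue, and with $i = p$ when $p \in R_U$ to force $v_1$ Blue --- pins many $v_k$'s to Blue. When $1 \in R_U$, this forces every $v_k$ Blue, contradicting $|B_R| \geq 2$. When $1 \in B_U$, we only deduce $v_1$ Blue and $v_i$ Blue for every $i \in R_U$, leaving the colors of $v_j$ for $j \in B_U \setminus \{1\}$ free (with $B_R \subseteq B_U \setminus \{1\}$). In this residual case I construct a monochromatic Blue hyperedge using step (iii) at $p = 1$ with $K = B_R$ (replaced by $B_R \cup \{1\}$ to correct parity when $|B_R|$ is even); if $|B_R|$ is too large for step (iii) to fit within $|K| \leq \lfloor n/2 \rfloor$, I fall back to step (iv) at $p = 1$ with $K = R_U$ (replaced by $R_U \cup \{1\}$ to correct parity when $|R_U|$ is odd). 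The main obstacle is the size-and-parity bookkeeping in this last case: one must verify that in every configuration at least one of these choices has $|K|$ of the required parity and in the allowed range, which in the tightest boundary cases reduces to observing that the specific values of $|R_U|$ and $|B_R|$ that would defeat both options simultaneously cannot coexist modulo $4$.
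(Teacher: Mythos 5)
Your argument is internally sound as far as it goes --- I checked the size-and-parity bookkeeping you were unsure about in the final case, and it does close: the only configuration in which all four of your choices of $K$ fail forces $|R_U|=|B_R|=\lfloor n/2\rfloor$ with $n$ odd, and then the first pair of options demands $|B_R|$ even while the second demands $|R_U|$ odd, which is impossible since the two quantities are equal (it is a plain parity conflict, not one modulo $4$). The color-swap WLOG, the vacuity of the sub-cases where $(A_1)$ and $(B_{p_0})$ clash, and the realization of your monochromatic sets as step $(iii)$/$(iv)$ hyperedges at $p=1$ all check out.

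The real issue is which statement you have proved. You read the hypothesis as ``for every $p$, $(A_p)\vee(B_p)$ holds'' and you genuinely use it at many values of $p$ at once: the case split on whether some $(B_{p_0})$ holds, and the chaining of all the $(A_p)$'s, both draw on the hypothesis for $p\neq 1$. The paper's proof instead fixes a single $p$ for which the condition holds, assumes WLOG that there is no Red--Red matching pair in $(U,V^p)$, lets $K$ index the Red vertices of $U$, and exhibits a monochromatic hyperedge among $U$, $U_K\cup\bar{V}_K^p$, $V_K^p\cup\bar{U}_K$ by a short case analysis on $|K|$ and its parity --- that is, it proves the per-$p$ statement, using only the hyperedges attached to that one $p$. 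The stronger per-$p$ version is what the rest of the paper needs: Case 3 of the following lemma applies it for $p=i$ to a coloring that has both a Red--Red pair and a Blue--Blue pair at $p=1$ (so your all-$p$ hypothesis fails outright there), and Sections 4 and 5 invoke ``the $p=1$ case'' in hypergraphs that contain only the $p=1$ hyperedges. So as a free-standing proof of one literal reading of the lemma your argument works, but it is considerably longer than the paper's and proves a version too weak to substitute for it in the paper's later applications.
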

\begin{proof}
Assume for the sake of contradiction that $\chi$ is a proper $2$-coloring of $H$.
Without the loss of generality, let us assume that there is no \textit{matching pair} ($u_i,v_i^p$) in the set-ordered set pair ($U,V^p$) such that both $u_i$ and $v_i^p$ are colored by Red. Let $U_K$ denote the set of vertices in $U$ which are Red in this coloring, where $K \subseteq  \{1, 2, \ldots n\}$ denotes the set of indices of the vertices in the set $U_K$. Note that $U_K$ must be a proper subset of $U$, as $U$ is one of the hyperedges of $H$. Let us consider the following $2$ cases:

\begin{itemize}
\item \textbf{Case 1: $1 \leq |U_K| < \lfloor n/2\rfloor$}

If $|U_K|$ is even, then consider the hyperedge $V_K^p \cup {\bar{U}}_K$. All the vertices in this hyperedge are Blue, leading to a contradiction.

If $|U_K|$ is odd, then consider the hyperedge $U_K \cup {\bar{V}}_K^p$. At least one of the vertices, say 
$v_l^p \in {\bar{V}}_K^p$, must have been colored Blue in $\chi$. Consider the vertex $u_l$ such that ($u_l,v_l^p$) is a \textit{matching pair} of vertices in ($U$,$V^p$). It can be seen that the hyperedge $V_K^p \cup {\bar{U}}_K \cup \{v_l^p\} \setminus \{u_l\}$ is Blue, leading to a contradiction.

\item \textbf{Case 2: $\lfloor n/2\rfloor \leq |U_K| < n$}

If $n - |U_K|$ is odd, then consider the hyperedge $V_K^p \cup {\bar{U}}_K$. All the vertices in this hyperedge are Blue, leading to a contradiction.

If $n - |U_K|$ is even, then consider the hyperedge $U_K \cup {\bar{V}}_K^p$. At least one of the vertices, say $v_l^p \in {\bar{V}}_K^p$, must have been colored by Blue in $\chi$. Consider the vertex $u_l$ such that ($u_l,v_l^p$) is a \textit{matching pair} of vertices in ($U$,$V^p$). It can be seen that the hyperedge 
$V_K^p \cup {\bar{U}}_K \cup \{v_l^p\} \setminus \{u_l\}$ is Blue, leading to a contradiction. \qed
\end{itemize}
\end{proof}

\noindent In the following Lemma, we complete the proof for the non $2$-colorability of $H$.
\begin{lemma}
Our construction produces a hypergraph $H$ with a monochromatic hyperedge under any given Red-Blue coloring of its vertices.
\end{lemma}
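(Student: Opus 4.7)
The plan is to assume, for contradiction, that $\chi$ is a proper 2-coloring of $H$, and to derive a contradiction by combining Lemma~\ref{firstlemma} with the non-2-colorability of the core hypergraph $C$.

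If the hypothesis of Lemma~\ref{firstlemma} holds for $\chi$, then $\chi$ is not proper by Lemma~\ref{firstlemma}, a contradiction. Otherwise, there is some $p^\ast$ such that $(U, V^{p^\ast})$ contains both an RR and a BB matching pair, so Lemma~\ref{firstlemma} does not apply directly. In this remaining case we restrict $\chi$ to $X$ and use the non-2-colorability of $C$ to obtain a monochromatic hyperedge $e_l \in Y$; by the Red-Blue symmetry of the problem, we may assume $e_l$ is Red.

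If there exists $i \geq 2$ with $u_i = v_i = R$, then $\{u_i\} \cup e_l \cup \{v_i\}$ is a step-(i) hyperedge of $H$ that is entirely Red, contradicting propriety. Otherwise (the \emph{residual case}), every original matching pair $(u_i, v_i)$ with $i \geq 2$ has at least one Blue vertex. The main obstacle is to derive a contradiction in this residual case. The idea is to exhibit some $V^q$ having either no RR matching pair or no BB matching pair, and then apply the argument inside the proof of Lemma~\ref{firstlemma} (or its Red-Blue dual) to $V^q$, obtaining a monochromatic step-(ii), (iii), or (iv) hyperedge of $H$.

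A short case analysis accomplishes this. If $(u_1, v_1) \neq (R, R)$, then $(U, V^1)$ has no RR matching pair (the pairs $(u_i, v_i)$ with $i \geq 2$ fail by the residual assumption and $(u_1, v_1)$ fails by assumption), and the Lemma~\ref{firstlemma} argument applied to $V^1$ produces the contradiction. If $u_1 = v_1 = R$ and some $q \geq 2$ has $u_q = v_q = B$, then the matching pairs in $(U, V^q)$ are $(u_1, v_q) = (R, B)$, $(u_q, v_1) = (B, R)$, and $(u_k, v_k)$ for $k \neq 1, q$, the last of which is never RR by the residual assumption; so $(U, V^q)$ has no RR pair, and the Lemma~\ref{firstlemma} argument applied to $V^q$ gives the contradiction. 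If $u_1 = v_1 = R$ and no $q \geq 2$ has $u_q = v_q = B$, then every $(u_i, v_i)$ with $i \geq 2$ is mixed, hence $(U, V^1)$ has no BB matching pair, and the Red-Blue dual of the Lemma~\ref{firstlemma} argument applied to $V^1$ produces the contradiction. In every case we obtain a monochromatic hyperedge of $H$, completing the proof.
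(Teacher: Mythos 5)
Your proof is correct and follows essentially the same route as the paper's: use the monochromatic (WLOG Red) hyperedge of the core hypergraph together with the step-$(i)$ hyperedges to rule out Red-Red pairs $(u_i,v_i)$ for $i\geq 2$, then split on the color pattern of $(u_1,v_1)$ and the existence of a Blue-Blue pair, choosing $V^1$ or $V^q$ so that Lemma~\ref{firstlemma} applies. Your Cases A, B, C correspond exactly to the paper's Cases 1, 3, 2, with the Red--Blue dual in your Case C being what the paper's ``without loss of generality'' in the proof of Lemma~\ref{firstlemma} already covers.
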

\begin{proof}
Consider a Red-Blue coloring $\chi$. Without the loss of generality, let us assume that the \textit{core hypergraph} $C$ is monochromatic in Red under this coloring $\chi$. Note that the pair ($u_1,v_1$) is not combined with any hyperedge in the \textit{core hypergraph} to form a hyperedge. It can also be noted that any other pair ($u_i,v_i$), $i\not=1$, is combined with every hyperedge in the \textit{core hypergraph}, and therefore both $u_i$ and $v_i$ cannot be colored by Red.

\begin{itemize}
\item \textbf{Case 1:}
If either of $u_1$ or $v_1$ is colored with Blue, then there is no pair ($u_i,v_i$) such that $u_i$ and $v_i$ are both colored by Red. This satisfies the condition of Lemma \ref{firstlemma} for the set-ordered set pair ($U,V^1$). Hence, it follows from Lemma \ref{firstlemma} that the hypergraph $H$ is monochromatic under $\chi$.

\item \textbf{Case 2:}
If both $u_1$ and $v_1$ are colored with Red and if there does not exist any other pair of vertices ($u_i,v_i$) such that both $u_i$ and $v_i$ are colored with Blue, then it satisfies the condition of Lemma \ref{firstlemma} for the set-ordered set pair ($U,V^1$). Hence, it follows from Lemma \ref{firstlemma} that the hypergraph $H$ is monochromatic under $\chi$.

\item \textbf{Case 3:}
Let us assume that both $u_1$ and $v_1$ are colored with Red, and there exists at least one other pair ($u_i,v_i$), $i\not=1$, such that both $u_i$ and $v_i$ are colored with Blue. Consider the set-ordered set pair ($U,V^i$). It can be noted that there does not exist any \textit{matching pair} of vertices ($u_j,v_j^i$) in ($U,V^i$) such that both $u_j$ and $v_j^i$ are colored with Red. Hence, it follows from Lemma \ref{firstlemma} that the hypergraph $H$ is monochromatic under $\chi$.$\;\;\;\;$\qed\\
\end{itemize}
\end{proof}

We observe that using this construction, the first improvement is obtained when $n = 13$. The currently best known bound $m(13) \leq 360751$ is obtained from the recurrence relation provided by Abbott and Hanson \cite{Abbott} which is mentioned in Section $2$. Using the recurrence relation obtained from this construction, one can observe that $m(13) \leq 357892$. The next improvement is obtained when $n = 17$.

\section{Further improvement over the AHT construction}

For a given $n \geq 3$, let us consider an $(n-2)$-uniform hypergraph producing the best known upper bound for $m(n-2)$. Note that this hypergraph is the {\it core hypergraph} as described in the previous sections. Let $m_{n-2}$ be the number of hyperedges present in this \textit{core hypergraph} $C =(X, Y)$. The set of hyperedges is denoted by 
$Y = \{e_1, e_2, e_3, \ldots, e_{m_{n-2}}\}$, where $e_i$ is the $i^{th}$ hyperedge in the core hypergraph $C$. 
Let $U' = \{u'_1, u'_2, u'_3, \ldots, u'_{n-2}\}$ and $V' = \{v'_1, v'_2, v'_3, \ldots, v'_{n-2}\}$ denote two disjoint set of vertices, each of them disjoint from $X$, the set of vertices in the core hypergraph $C$.
Similarly, let $U = \{u_1, u_2, u_3, \ldots, u_n\}$ and $V = \{v_1, v_2, v_3, \ldots, v_n\}$ denote two disjoint set of vertices, each of them disjoint from $X$, $U'$ and $V'$. For each $1 \leq i \leq n$, we call $u_i$ and $v_i$ to be a pair of {\it matching vertices}. Similarly, for each $1 \leq i \leq n-2$, we call $u'_i$ and $v'_i$ to be a pair of {\it matching vertices}. For any $K =  \{a_1, a_2, \ldots a_k\}$ which is a proper subset of $\{1, 2, \ldots, n\}$ such that $1 \leq a_1 < a_2 \ldots < a_k \leq n$, we denote $U_K = \{u_{a_1}, u_{a_2}, \ldots, u_{a_k}\}$ and $V_K = \{v_{a_1}, v_{a_2}, \ldots, v_{a_k}\}$. We also define 
${\bar{U}}_K = U \setminus U_K$ and ${\bar{V}}_K = V \setminus V_K$. Similarly, for any $Q =  \{a_1, a_2, \ldots a_q\}$ which is a proper subset of $\{1, 2, \ldots, n-2\}$ such that $1 \leq a_1 < a_2 \ldots < a_q \leq n-2$, we denote $U'_Q = \{u'_{a_1}, u'_{a_2}, \ldots, u'_{a_q}\}$ and $V'_Q = \{v'_{a_1}, v'_{a_2}, \ldots, v'_{a_q}\}$. We also define 
${\bar{U'}}_Q = U' \setminus U'_Q$ and ${\bar{V'}}_Q = V' \setminus V'_Q$.

Let us now define the construction of the non-$2$-colorable $n$-uniform hypergraph $H$. The vertex set of this hypergraph is $X \cup U \cup V \cup U' \cup V'$, and the following hyperedges are present in $H$:\\

$(i)$ $\{u'_i\} \cup e_j \cup \{v'_i\}$ for every pair of $i, j$ satisfying $1 \leq i \leq n-2$ and $1 \leq j \leq m_{n-2}$.

$(ii)$ $U' \cup \{u_i\} \cup \{v_i\}$, for each $i$ satisfying $1 \leq i \leq n$.

$(iii)$ $U'_Q \cup {\bar{V'}}_Q \cup \{u_i\} \cup \{v_i\}$ for every $Q$ and $i$ such that $|Q|$ is odd, $1 \leq |Q| \leq \lfloor (n-2)/2 \rfloor$ and $1 \leq i \leq n$.

$(iv)$ $V'_Q \cup {\bar{U'}}_Q \cup \{u_i\} \cup \{v_i\}$ for every $Q$ and $i$ such that $|Q|$ is even, $2 \leq |Q| \leq \lfloor (n-2)/2 \rfloor$ and $1 \leq i \leq n$.

$(v)$ $U$

$(vi)$ $U_K \cup {\bar{V}}_K$ for every $K$ such that $|K|$ is odd, and $1 \leq |K| \leq \lfloor n/2 \rfloor$.

$(vii)$ $V_K \cup {\bar{U}}_K$ for every $K$ such that $|K|$ is even, and $2 \leq |K| \leq \lfloor n/2 \rfloor$. \\

Note that the Steps $(v)$, $(vi)$ and $(vii)$ in this construction are the same as the Steps $(ii)$, $(iii)$ and $(iv)$ of the construction of Abbott and Hanson \cite{Abbott} which is defined in Section $2$. The number of hyperedges formed in Step $(i)$ is $(n-2) \cdot m_{n-2}$. A total of $n \cdot \left( 1 + \binom{n-2}{1} + \binom{n-2}{2} + \ldots + \binom{n-2}{\lfloor (n-2)/2 \rfloor} \right)$ hyperedges are formed in Steps $(ii)$, $(iii)$ and $(iv)$. The number of hyperedges formed in Steps $(v)$, $(vi)$ and $(vii)$ is $1 + \binom{n}{1} + \binom{n}{2} + \ldots + \binom{n}{\lfloor n/2 \rfloor}$. Therefore, we obtain:\\\\
$m(n) \leq (n+4) \cdot 2^{n-3} + (n-2) \cdot m(n-2)$ when $n$ is odd and,\\ 
$m(n) \leq (n+4) \cdot 2^{n-3} +n \cdot {n-2 \choose {(n-2)/2}}/{2} + {n \choose {n / 2}}/{2} + (n-2) \cdot m(n-2)$ when $n$ is even.\\

\noindent The following Lemma proves that $H$ is not $2$-colorable.
\begin{lemma}
The construction defined above produces an $n$-uniform hypergraph $H$, which has a monochromatic hyperedge under any given Red-Blue coloring of its vertices.
\end{lemma}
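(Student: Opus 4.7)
The plan is to argue by contradiction: suppose $\chi$ is a proper Red-Blue coloring of $H$. Since the core $C$ is non-$2$-colorable, some hyperedge $e_{j_0}$ of $C$ is monochromatic under $\chi$; by swapping color names if necessary we may assume $e_{j_0}$ is Red. The Step $(i)$ hyperedges $\{u'_i\}\cup e_{j_0}\cup\{v'_i\}$ are monochromatic Red iff both $u'_i$ and $v'_i$ are Red, so for $\chi$ to be proper we must have: no matching pair $(u'_i,v'_i)$ is both Red.

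I would next show that $\chi|_{U'\cup V'}$ forces a monochromatic hyperedge $A$ inside the $(n-2)$-uniform ``AHT cap'' on $(U',V')$ --- namely among $U'$, the sets $U'_Q\cup\bar{V'}_Q$ with $|Q|$ odd and $1\le|Q|\le\lfloor(n-2)/2\rfloor$, and the sets $V'_Q\cup\bar{U'}_Q$ with $|Q|$ even and $2\le|Q|\le\lfloor(n-2)/2\rfloor$. This is obtained by replaying the proof of Lemma \ref{firstlemma} (with $p=1$) on $(U',V')$ in place of $(U,V)$ and with $n$ replaced by $n-2$: the derived ``no Red matching pair'' property, together with a case analysis on $|R|$ where $R$ is the Red-index set of $U'$, forces some cap hyperedge to be monochromatic. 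Let $c\in\{\mathrm{Red},\mathrm{Blue}\}$ denote the color of $A$.

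Although $A$ is not itself a hyperedge of $H$, for every $k\in\{1,\ldots,n\}$ the set $A\cup\{u_k,v_k\}$ \emph{is} a hyperedge of $H$ --- appearing in Step $(ii)$, $(iii)$, or $(iv)$ according to the form of $A$. If both $u_k$ and $v_k$ were colored $c$, then $A\cup\{u_k,v_k\}$ would be monochromatic of color $c$, contradicting properness of $\chi$. Hence for every $k$, no matching pair $(u_k,v_k)$ is both $c$. Applying the Lemma \ref{firstlemma} argument a second time --- now to $\chi|_{U\cup V}$, with Steps $(v),(vi),(vii)$ playing the role of the AHT cap and with the derived hypothesis ``no $(u_k,v_k)$ both $c$'' (after a Red$\leftrightarrow$Blue swap if $c=\mathrm{Blue}$, so that the hypothesis becomes ``no pair both Red'') --- yields a monochromatic hyperedge in Steps $(v)$--$(vii)$, the desired contradiction.

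The main obstacle I expect is the case analysis underlying each of the two Lemma \ref{firstlemma}-style applications: one must cover all sizes of the Red index set of $U'$ (respectively $U$), and in particular handle the parity/boundary cases near $\lfloor(n-2)/2\rfloor$ (respectively $\lfloor n/2\rfloor$), where one may need to replace the index set $K$ by its complement or adjust it by one element to remain within the allowed range of Step $(iii)/(iv)$ (resp.\ $(vi)/(vii)$). Once those case analyses are in place, the passage from $A$ to an honest monochromatic hyperedge of $H$ via the two-vertex extension $\{u_k,v_k\}$ is immediate.
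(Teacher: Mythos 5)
Your proposal is correct and follows essentially the same route as the paper: derive ``no pair $(u'_i,v'_i)$ both Red'' from Step $(i)$, invoke the $p=1$ case of Lemma \ref{firstlemma} on $(U',V')$ to get a monochromatic $(n-2)$-set $A$ among the cap sets of Steps $(ii)$--$(iv)$, use the extensions $A\cup\{u_k,v_k\}$ to forbid any pair $(u_k,v_k)$ from being monochromatic in $A$'s color, and invoke Lemma \ref{firstlemma} once more on $(U,V)$ for Steps $(v)$--$(vii)$. The boundary/parity case analysis you worry about is already contained in the proof of Lemma \ref{firstlemma}, so no additional work is needed beyond citing it.
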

\begin{proof}
Assume for the sake of contradiction that $\chi$ is a proper $2$-coloring of $H$. Without the loss of generality, let us assume that the non-$2$-colorable core hypergraph $C$ is monochromatic in Red under this coloring $\chi$. Note that none of the matching pair of vertices $u'_i$ and $v'_i$, $1 \leq i \leq (n-2)$, can have both their colors as Red as it causes a contradiction to our assumption due to Step $(i
)$ of the construction. Therefore, it follows from the $p=1$ case of Lemma \ref{firstlemma} that at least one of the $(n-2)$-uniform sets formed in the Steps $(ii), (iii)$ and $(iv)$ (after excluding the vertices $u_i$ and $v_i$, $1 \leq i \leq n$) of the construction must be monochromatic under the given coloring $\chi$. Without the loss of generality, let us assume that it is monochromatic in Blue. Then, none of the matching pair of vertices $u_i$ and $v_i$, $1 \leq i \leq n$, can have both their colors as Blue in $\chi$ as it causes a contradiction to our assumption. Then, it follows from the $p=1$ case of Lemma \ref{firstlemma} that all the hyperedges formed in Steps $(v), (vi)$ and $(vii)$ cannot be non-monochromatic under the above condition imposed on $\chi$. This contradicts the assumption that $\chi$ is a proper $2$-coloring of $H$. \qed\\
\end{proof}

\section{A construction that improves the upper bound for m(8)}

Let $A =(V_1, E_1)$ be a non-$2$-colorable $5$-uniform hypergraph that produces the best known upper bound for $m(5)$. Let $E_1 = \{ e^A_1, e^A_2, e^A_3, \ldots, e^A_{m(5)} \}$. Let $B =(V_2, E_2)$ be a Fano plane \cite{klein} such that the set of hyperedges is $E_2 = \{ e^B_1, e^B_2, e^B_3, \ldots, e^B_{m(3)} \}$. Let $C =(V_3, E_3)$ be another Fano plane \cite{klein} such that the set of hyperedges is $E_3 = \{ e^C_1, e^C_2, e^C_3, \ldots, e^C_{m(3)} \}$. Let $U = \{u_1, u_2, u_3, \ldots, u_8\}$ and $V = \{v_1, v_2, v_3, \ldots, v_8\}$ denote two disjoint set of vertices, each of them disjoint from $V_1$, $V_2$ and $V_3$. For each $1 \leq i \leq 8$, we call $u_i$ and $v_i$ to be a pair of {\it matching vertices}. For any $K =  \{a_1, a_2, \ldots a_k\}$ which is a proper subset of $\{1, 2, \ldots, 8\}$ such that $1 \leq a_1 < a_2 \ldots < a_k \leq 8$, we denote $U_K = \{u_{a_1}, u_{a_2}, \ldots, u_{a_k}\}$ and $V_K = \{v_{a_1}, v_{a_2}, \ldots, v_{a_k}\}$. We also define 
${\bar{U}}_K = U \setminus U_K$ and ${\bar{V}}_K = V \setminus V_K$.

Let us now define the construction of the non-$2$-colorable $8$-uniform hypergraph $H$. The vertex set of this hypergraph is $U \cup V \cup V_1 \cup V_2 \cup V_3$, and the following hyperedges are present in $H$:\\

$(i)$ $ e^A_j \cup e^B_l $ for every $j$ and $l$ satisfying $1 \leq j \leq m(5)$ and $1 \leq l \leq m(3)$.

$(ii)$ $ e^A_j \cup e^C_{l'} $ for every $j$ and $l'$ satisfying $1 \leq j \leq m(5)$ and $1 \leq l' \leq m(3)$.

$(iii)$ $ \{u_i\} \cup e^B_l \cup e^C_{l'} \cup \{v_i\} $ for every $i$, $l$ and $l'$ satisfying $1 \leq i \leq 8$, $1 \leq l, l' \leq m(3)$.

$(iv)$ $U$

$(v)$ $U_K \cup {\bar{V}}_K$ for every $K$ such that $|K|$ is odd, and $1 \leq |K| \leq 3$.

$(vi)$ $V_K \cup {\bar{U}}_K$ for every $K$ such that $|K|$ is even, and $2 \leq |K| \leq 4$. \\

Note that the Steps $(iv)$, $(v)$ and $(vi)$ in this construction are the same as the Steps $(ii)$, $(iii)$ and $(iv)$ of the construction of Abbott and Hanson \cite{Abbott} which is defined in Section $2$. The number of hyperedges formed in Step $(i)$ is $m(5) \cdot m(3) \leq 357$. The same number of hyperedges is formed in Step $(ii)$. The Step $(iii)$ produces $8 \cdot m(3) \cdot m(3) \leq 392$ hyperedges, while a single hyperedge is formed in Step $(iv)$. The Steps $(v)$ and $(vi)$ produce a total of $\binom{8}{1} + \binom{8}{2} + \binom{8}{3} + \binom{8}{4}$ hyperedges. Adding these values, we obtain $m(8) \leq 1269$.\\

\noindent The following Lemma proves that $H$ is not $2$-colorable.
\begin{lemma}
The construction defined above produces an $8$-uniform hypergraph $H$, which has a monochromatic hyperedge under any given Red-Blue coloring of its vertices.
\end{lemma}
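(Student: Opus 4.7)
The plan is to assume, for contradiction, that $\chi$ is a proper Red-Blue $2$-coloring of $H$, and then to cascade through the non-$2$-colorability of the three core sub-structures $A$, $B$, $C$ in order to reduce the situation to the hypothesis of Lemma~\ref{firstlemma}. Since $A$ is non-$2$-colorable, some hyperedge $e^A_{j^*} \in E_1$ is monochromatic under $\chi|_{V_1}$; without loss of generality, take it to be all Red. Then, for every $l$ and every $l'$, the Step $(i)$ hyperedge $e^A_{j^*} \cup e^B_l$ and the Step $(ii)$ hyperedge $e^A_{j^*} \cup e^C_{l'}$ lie in $H$ and must each contain a Blue vertex, so no hyperedge of $B$ is all Red under $\chi|_{V_2}$, and no hyperedge of $C$ is all Red under $\chi|_{V_3}$. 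Since $B$ and $C$ are Fano planes and thus non-$2$-colorable, each must contain an all-Blue hyperedge; fix such $e^B_{l^*}$ and $e^C_{l'^*}$.

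Next I propagate this all-Blue conclusion to the matching-pair vertices via the Step $(iii)$ hyperedges. For every $1 \leq i \leq 8$, the hyperedge $\{u_i\} \cup e^B_{l^*} \cup e^C_{l'^*} \cup \{v_i\}$ belongs to $H$, and its six vertices in $V_2 \cup V_3$ are entirely Blue, so preventing monochromaticity forces at least one of $u_i, v_i$ to be Red. Consequently, no matching pair $(u_i, v_i)$ is entirely Blue under $\chi$.

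At this point I observe that Steps $(iv)$, $(v)$, and $(vi)$ of the present construction coincide exactly with Steps $(ii)$, $(iii)$, and $(iv)$ of the AHT construction of Section~$2$, which are also the $p = 1$ hyperedges of the Section~$3$ construction. Since no matching pair is all Blue, the hypothesis of Lemma~\ref{firstlemma} holds trivially: there cannot simultaneously exist an all-Red and an all-Blue matching pair when no all-Blue matching pair exists. Invoking the $p = 1$ case of Lemma~\ref{firstlemma} then yields a monochromatic hyperedge among Steps $(iv)$--$(vi)$, contradicting the properness of $\chi$ and completing the proof.

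The main obstacle is conceptual rather than computational: one has to verify that the cascade $A \to \{B,C\} \to$ matching pairs really ends up matching the symmetric hypothesis of Lemma~\ref{firstlemma} despite the asymmetric WLOG choice at the start (Red on $A$, forced Blue on $B$ and $C$). This works out because the lemma's hypothesis is phrased symmetrically in the two colors, so the deduction ``no matching pair is all Blue'' is exactly the right input to apply it, with no swapping of color labels required.
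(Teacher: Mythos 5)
Your proof is correct and follows essentially the same route as the paper: extract a monochromatic (WLOG Red) hyperedge of $A$, use Steps $(i)$ and $(ii)$ to force all-Blue hyperedges in $B$ and $C$, use Step $(iii)$ to rule out all-Blue matching pairs, and then invoke the $p=1$ case of Lemma~\ref{firstlemma} on Steps $(iv)$--$(vi)$. The only difference is that you spell out more explicitly why the one-sided conclusion (no all-Blue pair) suffices for the lemma's symmetric hypothesis, which the paper leaves implicit.
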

\begin{proof}
Assume for the sake of contradiction that $\chi$ is a proper $2$-coloring of $H$. Without the loss of generality, let us assume that the non-$2$-colorable $5$-uniform hypergraph $A$ is monochromatic in Red under this coloring $\chi$ and let $e^A_j \in E_1$ be the hyperedge that is monochromatic in Red. It can be noted from Step $(i)$ of the construction that the non-$2$-colorable $3$-uniform hypergraph $B$ must contain a hyperedge $e^B_l \in E_2$ which is monochromatic in Blue. Similarly, it can be seen from Step $(ii)$ of the construction that the non-$2$-colorable $3$-uniform hypergraph $C$ contains a hyperedge $e^C_{l'} \in E_3$ which is monochromatic in Blue. Since both the hyperedges $e^B_l$ and $e^C_{l'}$ are Blue, it follows from Step $(iii)$ that any of the \textit{matching pair} of vertices $u_i$ and $v_i$, 
$u_i \in U$, $v_i \in V$, $1 \leq i \leq 8$, cannot have both their colors as Blue in $\chi$. It follows from the
$p=1$ case of Lemma \ref{firstlemma} that all the hyperedges formed in the Steps $(iv), (v)$ and $(vi)$ cannot be non-monochromatic under the above condition imposed on $\chi$. This contradicts the assumption that
$\chi$ is a proper $2$-coloring of $H$. \qed
\\
\end{proof}

\begin{table}
\caption{The improved upper bound on $m(n)$ for small values of $n$.}
\begin{center}
\begin{tabular}{|c|c|c|}
\hline
$n$ & $m(n)$ & Obtained by the recurrence\\
\hline
1& $m(1) = 1$&Single vertex\\
2&$m(2) = 3$&Triangle graph\\
3&$m(3) = 7$&Fano plane \cite{klein}\\
4&$m(4) = 23$&$m(4) \leq 2^3 + {4 \choose 2} / {2} + 4 \cdot m(2)$ and $\cite{patric}$\\
5&$m(5) \leq 51$&$m(5) \leq 2^4 + 5 \cdot m(3)$\\
6&$m(6) \leq 147$&$m(6) \leq m(2) \cdot m(3)^2$\\
7&$m(7) \leq 421$&$m(7) \leq 2^6 + 7 \cdot m(5)$\\
\textbf{8}&$\textbf{m(8)} \leq \textbf{1269}$&$m(8) \leq 2 \cdot m(5) \cdot m(3) + 8 \cdot m(3) \cdot m(3) + 2^7 + {8 \choose 4} / {2} $\\
9&$m(9) \leq 2401$&$m(9) \leq m(3) \cdot m(3)^3$\\
10&$m(10) \leq 7803$&$m(10) \leq m(2) \cdot m(5)^2$\\
\textbf{11}&$\textbf{m(11)} \leq \textbf{25449}$&$m(11) \leq 15 \cdot 2^8 + 9 \cdot m(9)$\\
12&$m(12) \leq 64827$&$m(12) \leq m(2) \cdot m(6)^2$\\
\textbf{13}&$\textbf{m(13)} \leq \textbf{297347}$&$m(13) \leq 17 \cdot 2^{10} + 11 \cdot m(11)$ \\
14&$m(14) \leq 531723$&$m(14) \leq m(2) \cdot m(7)^2$\\
15&$m(15) \leq 857157$&$m(15) \leq m(5) \cdot m(3)^5$\\
16&$m(16) \leq 5378763$&$m(16) \leq m(2) \cdot m(8)^2$\\
\textbf{17}&$\textbf{m(17)} \leq \textbf{13201419}$&$m(17) \leq 21 \cdot 2^{14} + 15 \cdot m(15)$ \\

\hline
\end{tabular}
\end{center}
\end{table}

In Table $2$, we show the improved upper bounds on $m(n)$ for small values of $n$. Comparing Tables $1$ and $2$, one can observe that the first few improvements are obtained when $n = 8$, $11$, $13$ and $17$.

\section{Conclusion}

Although the constructions given in this paper provide better upper bounds on $m(n)$ for small values of $n$, it gives a weak upper bound for a general $n$. Moreover, when $n<8$, the upper bounds on $m(n)$ provided by our constructions do not improve the upper bounds provided by  previously known constructions. The first interesting open problem is to improve the upper bound on $m(5)$ or to prove that $m(5)=51$.

\section*{Acknowledgement}

This work has been supported by Ramanujan Fellowship, Department of Science and Technology, Government of India, grant number SR/S2/RJN-87/2011.

\small


\begin{thebibliography}{2200}


\bibitem{Abbott}
H. L. Abbott and D. Hanson, 
On a combinatorial problem of Erd\H{o}s,
Canad. Math. Bull. 12 (1969), 823-829.


\bibitem{abb&mos}
H. L. Abbott and L. Moser,
On a combinatorial problem of Erd\H{o}s and Hajnal,
Canad. Math. Bull. 7 (1964), 177-181.

\bibitem{beck}
J. Beck,
On 3-chromatic hypergraphs, Discrete mathematics 24 (1978), 127-137.

\bibitem{erdos1}
P. Erd\H{o}s,
On a combinatorial problem,
Nordisk Mat. Tidsskr 11 (1963), 5-10.


\bibitem{erdos}
P. Erd\H{o}s,
On a combinatorial problem II,
Acta Mathematica of the Academy of sciences Hungary 15 (1964), 445-447.

\bibitem{exoo}
G. Exoo,
On constructing hypergraphs without property B,
Ars Combin. 30 (1990), 3-12.


\bibitem{geba} 
H. Gebauer,
On the construction of 3-chromatic hypergraphs with few edges,
Journal of Combinatorial Theory, Series A, 120 (7) (2013), 1483-1490.

\bibitem{toft} 
T.R. Jensen and B. Toft,
Graph coloring problems,
Wiley-Interscience Series, John Wiley and Sons (1995), 231-232.


\bibitem{klein}
F. Klein,
Zur Theorie der Liniencomplexe des ersten und zweiten Grades,
Math. Ann. 2 (1870), 198-226.

\bibitem{Lovasz}
L. Lov\'{a}sz,
Combinatorial Problems and Exercises, $2^{nd}$ edition,
North Holland (1979).

\bibitem{manning}
G. M. Manning,
Some results on the m(4) problem of Erd\H{o}s and Hajnal,
Electronic Research Announcements of the American Mathematical Society 1 (1995),
112-113. 

\bibitem{patric} 
P. R. J. \"{O}sterg\r{a}rd,
On the minimum size of 4-uniform hypergraphs without property B,
Discrete Applied Mathematics 163 (2) (2014), 199-204.

\bibitem{radhakrishnan}
J. Radhakrishnan and A. Srinivasan, 
Improved bounds and algorithms for hypergraph 2-coloring, 
Random Structures Algorithms 16 (1) (2000), 4-32; also in
FOCS 1998.


\bibitem{seymour} 
P.D. Seymour,
A note on a combinatorial problem of Erd\H{o}s and Hajnal,
J. London Math. Soc. II 8 (1974), 681-682.


\bibitem{toft2}
B. Toft.
On color-critical hypergraphs,
Infinite and Finite Sets, Colloq. Math. Soc. 3 (1975): 1445-1457.


\end{thebibliography}
\end{document}